\theoremstyle{plain}
\newtheorem{theorem}{Theorem}[section]
\newtheorem{lemma}[theorem]{Lemma}
\newtheorem{proposition}[theorem]{Proposition}
\theoremstyle{definition}
\newtheorem{remark}[theorem]{Remark}
\numberwithin{equation}{section}
\newtheorem*{theorem*}{Theorem}
\newcommand{\R}{{\mathbb R}}
\def\XXint#1#2#3{{\setbox0=\hbox{$#1{#2#3}{\int}$}
\vcenter{\hbox{$#2#3$}}\kern-.5\wd0}}
\DeclareMathOperator{\supp}{supp}
\providecommand{\norm}[1]{ \lVert#1  \rVert}
\author{Olli Saari and Christoph Thiele}
\title[Lipschitz hyperbolic cross]{Lipschitz linearization of the maximal hyperbolic cross multiplier}
\address{Olli Saari,	
	Department of Mathematics and Systems Analysis, 
	Aalto University School of Science,
	FI-00076 Aalto, 
	Finland
} \email{olli.saari@aalto.fi}
\address{Christoph Thiele,	
	Institute of Mathematics, 
	University of Bonn,
	Endenicher Allee 60, 53115, Bonn,
	Germany
} \email{thiele@math.uni-bonn.de}
\subjclass[2010]{Primary: 42B15, 42B25} 
\keywords{Hyperbolic cross, multipliers, maximal functions, square functions}
\begin{document}

\begin{abstract}
We study the linearized maximal operator associated with dilates of the hyperbolic cross multiplier in dimension two. Assuming a Lipschitz condition and a lower bound on the linearizing function, we obtain $L^{p}(\mathbb{R}^{2}) \to L^{p}(\mathbb{R}^{2})$ bounds for all $1<p <\infty$. We discuss various related results. 
\end{abstract}

\maketitle

\section{Introduction}

Given a bounded measurable function $m$ on $\R^d$, we define the multiplier 
operator $T_m$ by setting $\widehat{T_m f} = m \widehat{f}$ for every Schwartz function $f$.  We also define for $t>0$ the dilated operators 
\[ \widehat{ T_{m,t} f} (\eta) = m(t \eta) \widehat{f}(\eta) , \]
the corresponding maximal operator
\[  T_{m,*} f (x) = \sup_{t>0} |T_{m,t} f(x)| ,\]
and the corresponding linearized maximal operator 
\[  T_{m,V} f (x) = T_{m,V(x)} f(x) ,\]
for a measurable function $V:\R^d \to (0,\infty)$.
For example, in the case of Bochner-Riesz multipliers $m$, such operators 
have been studied in \cite{Carbery1983}.

In this paper we are concerned with the family of hyperbolic cross multipliers, see \cite{Carbery1984,DTU2016,El-Kohen1983}. A hyperbolic cross multiplier on $\R^2$ takes the form
$m(|\eta \xi|)$ for some compactly supported smooth function $m$. 
By the Marcinkiewicz multiplier theorem, this multiplier is bounded on $L^{p}(\mathbb{R}^{2})$ for all $1<p <\infty$. Whether the corresponding maximal operator satisfies any $L^p$ bounds remains open, this question has been posed
in \cite{DT1985,SW2011}. In general, a supremum over an infinite family of Marcinkiewicz or H\"ormander-Mikhlin multipliers does not admit such bounds \cite{CGHS2005,GHS2006,Honzik2014}. 

We propose a Lipschitz assumption on the linearizing function $V$,
in analogy to conjectures for directional operators, see \cite{LaceyLi2010}.
To be efficient in the otherwise dilation invariant setting, 
this assumption has to come with a truncation of the maximal operator, 
which in the theorems below takes either the form of a truncation of the 
multiplier on the Fourier side or a cutoff for the linearization function.

The hyperbolic cross has spikes in vertical and horizontal directions.
Splitting the multiplier into separate pieces, each spike needs a 
Lipschitz assumption only in one variable corresponding to the direction
of the spike. 

\begin{theorem}
\label{thm:main_theorem}
Let $m \in C^{3}(\mathbb{R})$ be a compactly supported function of one variable and 
$\beta \in \mathbb{R}$. 
Let $V : \mathbb{R}^{2} \to [0,\infty)$ be one-Lipschitz in the first variable, that is
\begin{equation}
\label{eq:intro1}
| V(x,y) - V(x',y)| \leq |x-x'|  
\end{equation}
for all $x,x',y \in \mathbb{R}$. Then 
\[T_{m,V} \Pi_{\beta} f(x,y) := \iint_{ \{ |\eta|^{\beta} \leq 1 \}}  m(V(x,y) |\xi | |\eta  |^{\beta})  \widehat{f}(\xi, \eta) e^{2\pi i  (x \xi + y \eta)} d\eta d\xi  \]
satisfies 
\[\norm{T_{m,V}\Pi_\beta f}_{L^{p}(\mathbb{R}^{2})} \leq  C(p,m,\beta) \norm{f}_{L^{p}(\mathbb{R}^{2})}. \]
for all $1 < p < \infty$ and all Schwartz functions $f$.  Here we denoted
$$\Pi_\beta= T_{1_{ \{|\eta| ^\beta\le 1\}}}.$$

\end{theorem}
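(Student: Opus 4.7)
\smallskip
My plan is to carry out an $\eta$-Littlewood--Paley decomposition combined with a Fourier series expansion of the smooth profile $(|\eta|/2^k)^\beta$ on each dyadic annulus, reducing the operator at each scale to a one-dimensional linearized multiplier in $x$. Since $\Pi_\beta$ is a Fourier multiplier whose symbol depends only on $\eta$ and is essentially the characteristic function of an interval, it is bounded on $L^p(\R^2)$ for $1<p<\infty$, so it suffices to prove $\|T_{m,V} g\|_p \lesssim \|g\|_p$ for Schwartz $g$ whose Fourier transform is supported in $\{|\eta|^\beta \le 1\}$. Fix a smooth dyadic partition $\{\phi_k\}$ in $\eta$ adapted to annuli $|\eta| \sim 2^k$, let $Q_k^y$ denote the corresponding Littlewood--Paley projection in $y$, and write $g = \sum_k g_k$ with $g_k = Q_k^y g$.

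\smallskip
On each annulus factor $|\eta|^\beta = 2^{k\beta}(|\eta|/2^k)^\beta$. For $\tau$ in the bounded range where $m(\tau|u|^\beta)\phi_0(u)$ is non-trivial, expand this smooth compactly supported function of $u$ as a Fourier series $\sum_j a_j(\tau)e^{2\pi iju/L}$ on a fixed interval; the coefficients $a_j$ are smooth with uniform compact support in $\tau$ and with rapid decay in $j$. Recognising the phase $e^{2\pi iju/L}$ as a $y$-translation by $\theta_k^j = j/(L\cdot 2^k)$,
\[
T_{m,V}g_k(x,y) = \sum_{j\in\Z} S^{k,j}_{V(\cdot,y)}\bigl(g_k(\cdot,\,\cdot+\theta_k^j)\bigr)(x,y),
\]
where $S^{k,j}_\phi$ is the one-dimensional-in-$x$ Fourier multiplier with symbol $a_j(\phi(x)\cdot 2^{k\beta}|\xi|)$. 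Because $a_j$ is smooth and compactly supported, the $x$-kernel of $S^{k,j}_{V(\cdot,y)}$ is a Schwartz-controlled dilate of a fixed profile, yielding the pointwise bound $|S^{k,j}_{V(\cdot,y)}h(x,y)| \le C_j M_x h(x,y)$ with $C_j = O_N((1+|j|)^{-N})$.

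\smallskip
\textbf{Main obstacle and assembly.} Granted an output-side Littlewood--Paley square function estimate
\[
\|T_{m,V}g\|_p \lesssim \Bigl\|\Bigl(\sum_k |T_{m,V}g_k|^2\Bigr)^{1/2}\Bigr\|_p,
\]
the proof is completed by Minkowski in $\ell^2$ over $j$, Peetre's maximal inequality to absorb the $y$-shifts $\theta_k^j$ (producing only polynomial factors in $|j|$, dominated by the rapid decay of $C_j$), the Fefferman--Stein vector-valued maximal inequality for $M_x$, and the $y$-Littlewood--Paley square function $\|g\|_p \asymp \|(\sum_k|g_k|^2)^{1/2}\|_p$. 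The delicate point is the output square function itself: because $V$ is \emph{not} assumed regular in $y$, the pieces $T_{m,V}g_k$ fail to be cleanly $\eta$-Fourier-localized on the annulus $|\eta|\sim 2^k$, so Plancherel does not directly supply the required orthogonality. The Lipschitz hypothesis on $V$ in $x$ enters in an essential way precisely here, providing the regularity of the symbol in $x$ needed to establish almost-orthogonality across the dyadic $\eta$-scales (for instance via a Cotlar--Stein-type argument or a paraproduct-style decomposition exploiting the $x$-Fourier decay of the symbol). This is the main technical hurdle of the proof and the step in which the Lipschitz assumption is decisive.
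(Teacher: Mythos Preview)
Your outline has a genuine gap at precisely the step you flag as the main hurdle: the output-side square-function inequality
\[
\|T_{m,V}g\|_p \lesssim \Bigl\|\Bigl(\sum_k|T_{m,V}g_k|^2\Bigr)^{1/2}\Bigr\|_p.
\]
Every concrete step you list after it---the Fourier-series linearization, the pointwise domination $|S^{k,j}_{V(\cdot,y)}h|\le C_j M_x h$, the Peetre shift estimate, and Fefferman--Stein---is valid but uses neither the Lipschitz hypothesis on $V$ nor the truncation $|\eta|^\beta\le 1$. That is a serious warning sign: without those two hypotheses the conclusion is not expected to hold (the untruncated maximal operator is a known open problem), so any correct proof must exploit them somewhere specific. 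Your assertion that $x$-Lipschitz regularity of $V$ supplies almost-orthogonality across dyadic $\eta$-scales is not substantiated, and the direction seems wrong: the $\eta$-scales live on the $y$-Fourier side, $V$ carries no assumed regularity in $y$, and you offer no mechanism by which smoothness of the symbol in $x$ could produce cancellation between different $\eta$-annuli. The references to Cotlar--Stein or paraproduct decompositions are gestures rather than arguments, and I do not see a way to make them work here.

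The paper proceeds quite differently. It runs Calder\'on reproducing formulas in \emph{both} variables simultaneously, one of them with a kernel of compact spatial support in the Lipschitz direction. The operator splits into an error part, handled by Marcinkiewicz-multiplier bounds and Khinchin's inequality, and a principal part, which is paired against a dual test function $g$. Writing out the compactly supported convolution kernel introduces an auxiliary integration variable and forces a comparison of $V$ at two nearby points in the Lipschitz direction; the Lipschitz bound together with the frequency truncation then yields $|V-V'|\le \tfrac13 V$, so the two dyadic levels of $V$ differ by at most one and the troublesome inner integral collapses to a single scale, after which square-function and vector-valued maximal estimates finish the job. That coupling of Lipschitz control and truncation is the crux of the argument, and it is not visible in your one-variable reduction.
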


The case $\beta=0$ is fairly easy, the maximal operator can then be dominated by the Hardy--Littlewood
maximal operator in the first variable. The Lipschitz
assumption is not needed in this case. For $\beta\neq 0$, the truncation to
$|\eta| \le 1$ or $|\eta| \ge 1$ depending on the sign of $\beta$ is the one alluded to above that is necessary
to make the Lipschitz assumption effective. Without truncation one could change the Lipschitz constant by scaling
the otherwise scaling invariant problem, and by a limiting process, letting the constant tend to infinity, get rid of the Lipschitz condition.

The next theorem has a truncation in the
linearizing function $V$, and one needs the Lipschitz condition 
only for distant points. For simplicity we formulate it only
in the case $\beta=1$ of the hyperbolic cross.

\begin{theorem}
\label{thm:main_theorem_2}
Let $m \in C^{3}(\mathbb{R})$ be a compactly supported function of one variable. Let $L> 0$ and let $V : \mathbb{R}^{2} \to [L^{2},\infty)$ satisfy
\begin{equation}
\label{eq:intro2}
| V(z) - V(z')| \leq \max( L^{2} , L |z-z'|  ) 
\end{equation}
for all $z,z' \in \mathbb{R}^{2}$. Then 
\[T_{m,V}f(x,y) = \iint_{\mathbb{R}^{2}} m(V(x,y) |\xi| | \eta| ) \widehat{f}(\xi, \eta) e^{2\pi i  (x \xi + y \eta)} d\xi d\eta  \]
satisfies 
\[\norm{T_{m,V}f}_{L^{p}(\mathbb{R}^{2})} \leq  C(p,m) \norm{f}_{L^{p}(\mathbb{R}^{2})}. \]
for all $1 < p < \infty$ and all Schwartz functions $f$.  
\end{theorem}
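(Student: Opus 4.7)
The plan is to reduce the claim to Theorem \ref{thm:main_theorem} with $\beta = 1$. I would first absorb $L$ via the rescaling $(x,y) \mapsto (Lx,Ly)$ together with $V(x,y) \mapsto L^{-2} V(Lx,Ly)$; this transforms the hypotheses into $V \ge 1$ and $|V(z) - V(z')| \le \max(1, |z-z'|)$ while changing $L^p$ norms only by a constant. Since $m$ has compact support, say in $[-M, M]$, and $V \ge 1$, the symbol $m(V|\xi||\eta|)$ is supported in the hyperbolic cross $\{|\xi||\eta| \le M\}$. Using a smooth partition of unity I would split this region into the two spikes $\{|\eta| \le 2\}$ and $\{|\xi| \le 2\}$; by the symmetry of the problem under swapping $(x,y) \leftrightarrow (y,x)$ and $(\xi,\eta) \leftrightarrow (\eta,\xi)$ combined with the isotropy of the hypothesis on $V$, it suffices to treat the first piece, which reduces (up to a smooth cutoff that is a Marcinkiewicz multiplier) to $T_{m,V}\Pi_1 f$.

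To invoke Theorem \ref{thm:main_theorem}, I need $V$ to be $1$-Lipschitz in the first variable. I would introduce the Pasch--Hausdorff envelope in $x$,
\[
\tilde V(x,y) := \inf_{x' \in \mathbb{R}} \bigl(V(x',y) + |x-x'|\bigr),
\]
which is automatically $1$-Lipschitz in $x$, satisfies $\tilde V \le V$, and inherits $\tilde V \ge 1$ from $V \ge 1$. The inequality $V(x,y) - V(x',y) \le \max(1,|x-x'|) \le |x-x'|+1$ taken over $x'$ yields $V \le \tilde V + 1$, so the perturbation $b := V - \tilde V$ lies in $[0,1]$ pointwise. Theorem \ref{thm:main_theorem} then controls the main term $T_{m,\tilde V}\Pi_1 f$.

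For the error $T_{m,V}\Pi_1 f - T_{m,\tilde V}\Pi_1 f$ I would Taylor-expand the symbol in the bounded perturbation $b$:
\[
m(Vs) - m(\tilde V s) \;=\; \sum_{k=1}^{K-1} \frac{(bs)^k}{k!}\,m^{(k)}(\tilde V s) + R_K, \qquad s = |\xi||\eta|.
\]
Rewriting $s^k m^{(k)}(\tilde V s) = \tilde V^{-k}\,\phi_k(\tilde V s)$ with $\phi_k(t) := t^k m^{(k)}(t)$ still compactly supported, the $k$-th summand equals the pointwise product of the bounded factor $b(x,y)^k/(k!\,\tilde V(x,y)^k) \in [0,1]$ with an operator $T_{\phi_k, \tilde V}\Pi_1 f$ of exactly the form treated in Theorem \ref{thm:main_theorem}, hence $L^p$-bounded.

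The main obstacle I foresee is the regularity loss in $\phi_k$: from $m \in C^3$ one only gets $\phi_k \in C^{3-k}$, so a literal application of Theorem \ref{thm:main_theorem} limits the usable Taylor order to $k \le 3$. I would address this by inspecting the proof of Theorem \ref{thm:main_theorem} to verify that only one or two derivatives of the symbol are in fact used quantitatively, so that a first- or second-order Taylor expansion with integral remainder closes the argument; alternatively, a dyadic decomposition of $m$ in the variable $Vs$ makes the perturbation $b$ interact with individually smooth pieces of $m$, and a geometric-series gain across the $\lesssim \log M$ nontrivial scales closes the sum.
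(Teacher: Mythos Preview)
Your approach is genuinely different from the paper's and has a gap that you identify but do not close. The paper does \emph{not} reduce Theorem~\ref{thm:main_theorem_2} to Theorem~\ref{thm:main_theorem} as a black box. Instead, both theorems are derived from a common technical lemma (Lemma~\ref{lemma:main_lemma2}), and for Theorem~\ref{thm:main_theorem_2} the proof of that lemma is rerun with one localized modification (Remark~\ref{remark:lipschitz}): after restricting by symmetry to the cone $|\xi|\le |\eta|$, the Littlewood--Paley parameters satisfy $s\lesssim t$, so at the crucial ``Lipschitz step'' one can estimate $L/t \lesssim L/\sqrt{st}\lesssim L\sqrt{V}\le V$ directly from the lower bound $V\ge L^{2}$. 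No regularization of $V$ is performed, and the $C^{3}$ budget for $m$ is never touched.

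Your Pasch--Hausdorff plus Taylor reduction, by contrast, spends regularity, and neither proposed fix recovers it. Resolution (a) fails: the third derivative of $m$ is genuinely used in the proof behind Theorem~\ref{thm:main_theorem}, since the ``small variation error'' is handled via $\tau\partial_\tau E_\tau$, whose symbol already contains $m'$, and the Marcinkiewicz estimate on that term costs two further derivatives. Resolution (b) produces no gain: on a dyadic piece where $\tilde V s\sim 2^{-j}$ one has $s\lesssim 2^{-j}$ while $|m_j'|\sim 2^{j}$, so $bs\cdot m_j'$ is $O(1)$ with no decay in $j$. More structurally, the integral remainder $\int_0^1 (bs)^{K} m^{(K)}\bigl((\tilde V+\theta b)s\bigr)\,d\theta$ is an operator of exactly the type you started with, now with linearization $V_\theta=\tilde V+\theta b$ that again satisfies only the $\max$-Lipschitz condition and not the $1$-Lipschitz one, so the scheme is circular. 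A smaller point: your two spikes $\{|\eta|\le 2\}$ and $\{|\xi|\le 2\}$ do not cover $\{|\xi\eta|\le M\}$ once $M>4$, and the passage from a cutoff at $|\eta|\le 2$ to the $\Pi_1$ cutoff $|\eta|\le 1$ is not ``just a Marcinkiewicz multiplier'' since the symbol still carries $V$; this is repairable by adding a third piece with $|\xi|,|\eta|$ bounded below, on which the effective range of $V$ is compact, but it should be stated.
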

 
It actually suffices to demand \eqref{eq:intro1} just for $L=|z-z'|$,  because for other pairs of points one can reduce to this case by a suitable chain of points connecting $z$ and $z'$. Questions of sharp modulus of continuity of $V$
therefore do not arise in this Theorem.

%In the special case $p = 2$, the function $m$ giving rise to the multiplier operator need not be smooth. We state (without a proof) an $L^{2}$ variant of our first main theorem, where the smoothness of $m$ has been changed to a lacunary range of $V$. This corollary follows from the proof of Theorem \ref{thm:main_theorem} with obvious changes:
% More explicit information on the constant $C(p,m)$ can be found in the statement of Lemma \ref{lemma:main_lemma}. 
%Note that measurable $V$ taking values in $\{2^{k}\}_{k \in \mathbb{Z}}$ in the% theorem above would correspond to the maximal operator of Seeger and Wright \c%ite{SW2011}. 

In the case of directional operators, the multiplier $m$ may be chosen to be
a characteristic function of a half line, essentially yielding what is called the 
directional Hilbert transform. In the case of the hyperbolic cross, 
a characteristic function $m$ is problematic since the boundary of the
hyperbolic cross is curved, and with the methods of the
celebrated disc multiplier theorem \cite{Fefferman1971} one sees that the operator $T_m$ cannot be bounded in $L^{p}(\mathbb{R}^{2})$ if $p \neq 2$. When $p=2$, this obstruction disappears, and we have the following variant of our result:
\begin{theorem}
\label{thm:3}
Let $V : \mathbb{R}^{2} \to \{2^{k} \}_{k \in \mathbb{Z}_{+}}$ be such that $V = 2^{ \lfloor \log_2 v \rfloor }$ for a function $v$ that is one-Lipschitz. Then
\[T_V f(x,y) = \iint_{\mathbb{R}^{2}}  1_{[0,1)}(|\eta|) (v(x,y) |\xi \eta |) \widehat{f}(\xi, \eta) e^{2\pi i  (x \xi + y \eta)} d\eta d\xi\] 
is bounded in $L^{2}(\mathbb{R}^{2})$.
\end{theorem}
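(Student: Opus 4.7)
The plan is to split the sharp hyperbolic cross cutoff into a smooth part---handled directly by Theorem~\ref{thm:main_theorem}---plus a boundary part supported in a thin hyperbolic annulus, and to control the latter on $L^2$ by a wave-packet orthogonality argument exploiting both the dyadic level-set structure of $V$ and the Lipschitz regularity of $v$.

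First, I fix $\phi\in C_c^\infty(\R)$ with $\phi\equiv 1$ on $[-1/2,1/2]$ and $\supp\phi\subset(-1,1)$, and set $\psi:=1_{[0,1)}-\phi$, supported in $[1/2,1)$. Decomposing $1_{[0,1)}(v|\xi\eta|)=\phi(v|\xi\eta|)+\psi(v|\xi\eta|)$ gives $T_V=T_V^{sm}+T_V^{sh}$. The smooth piece $T_V^{sm}$ has symbol $\phi(v|\xi\eta|)\cdot 1_{\{|\eta|<1\}}$, which is precisely the operator $T_{\phi,v}\Pi_1$ of Theorem~\ref{thm:main_theorem} applied with $m=\phi$, $\beta=1$, and linearizing function $v$ (one-Lipschitz in both variables, hence in the first). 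Theorem~\ref{thm:main_theorem} therefore yields $\|T_V^{sm}f\|_{L^2}\lesssim\|f\|_{L^2}$.

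For the sharp piece I would partition $\R^2$ into the disjoint level sets $E_k:=\{V=2^k\}$, on which $v\in[2^k,2^{k+1})$, and write $T_V^{sh}f=\sum_{k}1_{E_k}S_k(v)f$, where $S_k(v)$ is the multiplier with symbol $\psi(v|\xi\eta|)\cdot 1_{\{|\eta|<1\}}$. Uniformly in $v\in[2^k,2^{k+1})$, this symbol is supported in the thin hyperbolic annulus $\{|\eta|<1,\ |\xi\eta|\in[2^{-k-2},2^{-k}]\}$. I then decompose dyadically in $|\eta|\sim 2^{-j}$, $j\ge 0$ (forcing $|\xi|\sim 2^{j-k}$), and within each slab use a wave-packet decomposition by rectangles tangent to the arcs $\xi\eta=\text{const}$, in the spirit of Fefferman's disc-multiplier analysis. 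The disjointness of the frequency slabs in $j$ together with Plancherel reduces the global $L^2$ bound to an almost-orthogonality statement across $k$ and across tangential wave packets within each slab. Here the Lipschitz hypothesis enters: on the spatial tube dual to a tangential wave packet at scale $k$, the bound $|v(z)-v(z')|\le|z-z'|$ localizes the contribution of $1_{E_k}$ in a way compatible with the wave-packet geometry, enabling a $TT^*$ orthogonality estimate which combined with Plancherel closes the $L^2$ bound.

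The principal obstacle is this last almost-orthogonality step. The two-parameter family of wave packets (indexed by $j$ and by position along the hyperbola) must be reassembled together with the spatially varying cutoffs $1_{E_k}$, whose geometry is controlled only through the Lipschitz bound on $v$. In particular the degeneration as $j\to\infty$, where the tangent rectangles become long thin slabs nearly parallel to an axis of the cross and $T_V^{sh}$ resembles a directional operator, requires careful matching of the Lipschitz scale of $v$ against the wave-packet scale, and is where the real technical work will concentrate.
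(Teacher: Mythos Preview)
Your split into a smooth piece (handled by Theorem~\ref{thm:main_theorem}) and a sharp boundary piece is essentially the paper's decomposition into principal term and error term, and your treatment of the smooth part is correct and coincides with the paper's.

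Where you diverge is the sharp piece. You set up a wave-packet $TT^*$ scheme, bring in the Lipschitz bound on $v$ to control the interaction of the spatial cutoffs $1_{E_k}$ with tangential packets, and flag this as the principal obstacle. The paper does none of this. The point of the lacunary hypothesis is that the linearizing function in the multiplier is constant on each $E_k=\{V=2^k\}$, so the sharp symbol there is the \emph{fixed} Fourier multiplier $\psi(2^k|\xi\eta|)$, bounded by $1$ and supported in the annulus $\{|\xi\eta|\in[2^{-k-1},2^{-k}]\}$. Since the $E_k$ are disjoint and these annuli have bounded overlap, Plancherel gives immediately
\[
\Bigl\|\sum_k 1_{E_k}S_k f\Bigr\|_2^2=\sum_k\|1_{E_k}S_k f\|_2^2\le\sum_k\|S_k f\|_2^2\lesssim\|f\|_2^2.
\]
No Lipschitz input is used here; that hypothesis is consumed entirely by the smooth/principal part. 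This is exactly Remark~\ref{remark:lacunary}, which the paper's proof of Theorem~\ref{thm:3} invokes verbatim.

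You were pushed onto the harder path by writing the sharp symbol as $\psi(v|\xi\eta|)$ with the continuous $v$, so that the symbol still varies within each $E_k$ and the one-line orthogonality above is no longer available. Your wave-packet plan may be workable under that reading, but it is a substantial and unfinished detour: the almost-orthogonality across $k$ and along the hyperbola that you identify as the crux is not carried out, and it is precisely the step that the lacunary structure of $V$ is designed to render unnecessary.
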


This theorem should be compared with the main result in \cite{GT2016} by Guo and the second author. It is an analogous estimate for the directional Hilbert transform and shows that 
if the Lipschitz continuity is enhanced by a lacunarity-type condition on $V$, the $L^{p}$ bounds do follow for all $1<p<\infty$. It is a well known open problem whether or not $V$ being Lipschitz alone is enough to imply $L^{p}$ boundedness of the truncated directional Hilbert transform. On the other hand, a counterexample by Karagulyan \cite{Karagulyan2007} shows that without the Lipschitz assumption, the mere assumption of $V$ taking values in a lacunary set such as $\{2^{k}\}_{k \in \mathbb{Z}}$ is not sufficient.  

%
%The hyperbolic cross multiplier $T_{m,V}$ corresponds to restricting a function% to the set $ V |\xi| \lesssim  |\eta|^{-1} $ in the Fourier space. The directi%onal Hilbert transform, in turn, corresponds to restriction to the set $V |\xi%| \lesssim |\eta|$ so even if the kernel representation
%\[H_{V} f (x,y) := p.v. \int_{-1}^{1} f(x-t,y - V(x,y)t) \frac{dt}{t}   \]
%is seemingly unrelated to the hyperbolic cross, some structural similarities be%tween the two operators can be observed.

It is tempting to investigate whether Karagulyan's counterexample can be modified to have some implications in the case of the hyperbolic cross multiplier, such as 
proving unboundedness for the maximal linearized hyperbolic cross multiplier
in the absence of a Lipschitz condition. However, we have not been able to implement this idea. Karagulyan's example uses the fact that every sector between two
lines emanating from the origin contains discs of arbitrarily large diameter. 
The set between two dilates of hyperbolas however do not contain arbitrarily 
large discs.

The rest of the present paper is structured as follows. After introducing some notation, we prove a dyadic model of our theorems. This part is parallel to
\cite{GT2016} and further highlights the connection to the directional Hilbert transform. Then we prove technical versions, Lemma \ref{lemma:main_lemma} and Lemma \ref{lemma:main_lemma2}, of the main theorems, and the last section is devoted to deriving the theorems from the technical Lemmas.

\smallskip

\textit{Acknowledgement.} The authors would like to thank Andreas Seeger for bringing this problem to their attention, and Joris Roos for suggesting to look at general $\beta$. Part of the research was done during the first author's stay at the Mathematical Institute of the University of Bonn, which he wishes to thank for its hospitality. The first author acknowledges support from the V\"ais\"al\"a Foundation, the Academy of Finland, and the NSF Grant no.\ DMS-1440140 (the paper was finished while in residence at MSRI, Berkeley, California, during the Spring 2017 semester). The second author acknowledges support from the Hausdorff center of Mathematics and DFG grant CRC 1060. 

\section{Notation}
\label{sec:notation}
Most of the notation we use is standard. For a Schwartz function $f$, we define the Fourier transform as 
\[\widehat{f}(\xi, \eta) = \iint_{\mathbb{R}^{2}} f(x,y) e^{-2\pi i (x \xi + y \eta)} dxdy. \]
We denote by $C$ the positive constants that only depend on parameters we do not keep track of, $p$ and $\beta$. Dependency on $m$ is also sometimes hidden. If $A \leq C B$ for positive numbers $A$ and $B$, we write $A  \lesssim B$. For a set $E$, the characteristic function $1_E$ takes the value $1$ in $E$ and equals zero elsewhere. For a measurable set $E$, we denote its Lebesgue measure, whose dimension is always clear from the context, by $|E|$.

\section{The dyadic models}
%\label{sec:discrete}
In this section, $I$ and $J$ will denote dyadic intervals, that is, intervals of the form $ 2^{k} ((0,1] + n )$ with $n,k \in \mathbb{Z}$ and $n\ge 0$. With each dyadic interval $I$ we associate the $L^{2}$ normalized Haar function 
\[h_I = |I|^{-1/2} (1_{I^{-}} - 1_{1^{+}})\]
where $I^\pm$ denote the left and right halves of $I$. We consider the basis $\{h_I \otimes h_J\}_{I,J}$ in the plane, and $\langle \cdot, \cdot \rangle$ means the $L^{2}(\mathbb{R}^{2})$ inner product. Occasional use of the inner product in one coordinate is denoted by a subscript.

The dyadic metric is defined by $d(x,x') = \inf \{|I|: x,y \in I \}$ where the infimum is over all dyadic intervals. The two dimensional dyadic metric is defined analogously with dyadic squares. The following theorem is the dyadic model for Theorem \ref{thm:main_theorem_2}.

\begin{theorem}\label{dyadictheorem}
Let $L > 0$, and let 
\[V:(0,\infty)^{2} \to \{ 2^{k} \}_{k \in \mathbb{Z}}, \quad V^{1/2} >  L\] 
be $L$-Lipschitz with respect to the dyadic metric. Then
\[f \mapsto  \sum_{ |I||J| \leq V(x,y) } \langle h_{I} \otimes h_J , f \rangle h_{I}(x) h_J(y)   \] 
with the sum over dyadic intervals $I$ and $J$ is a bounded operator from $L^{p}((0,\infty)^{2})$ into itself for all $1<p<\infty$. 
\end{theorem}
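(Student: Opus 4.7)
The plan is to follow the template of Guo--Thiele for the dyadic model of the directional Hilbert transform. First, I would regroup the Haar coefficients by hyperbolic scale $c = \log_2(|I||J|)$ and write
\[
Tf(x,y) = \sum_{c \in \mathbb{Z}} 1_{\{K \ge c\}}(x,y)\, D_c f(x,y), \quad D_c f = \sum_{|I||J| = 2^c} \langle f, h_I \otimes h_J\rangle\, h_I \otimes h_J,
\]
where $K = \log_2 V$. The Littlewood--Paley equivalence $\|f\|_p \sim \|(\sum_c |D_c f|^2)^{1/2}\|_p$ for $1 < p < \infty$ follows from Khintchine's inequality and the $L^p$-boundedness (uniform in the sign pattern) of bi-parameter Haar multipliers whose coefficient depends only on $|I||J|$, which in turn is a consequence of the bi-parameter Haar square function estimate.

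The key input from the hypotheses is that, since $V$ takes values in $\{2^k\}_{k\in\mathbb Z}$ and $V^{1/2}>L$, the $L$-Lipschitz condition forces $V$ to be constant on every dyadic square $Q$ with $|Q|^{1/2}< V(x,y)^{1/2}/(2L)$ for some (equivalently every) $(x,y)\in Q$. Consequently, for every dyadic rectangle $I\times J$ with $\max(|I|,|J|)<V^{1/2}/(2L)$ on $I\times J$, the indicator $1_{\{K\ge c\}}$ is constant on $I\times J$ with some value $a_{I,J}\in\{0,1\}$. Collecting these "main" contributions gives a bi-parameter Haar multiplier with $\{0,1\}$ coefficients, which is bounded on $L^p$ by the same bi-parameter Haar multiplier theorem invoked above.

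The complementary "boundary" contribution involves rectangles on which $V$ is not constant. Combining the Lipschitz condition with the hyperbolic constraint $|I||J|\le V$ forces such rectangles to satisfy $\min(|I|,|J|)\le 2L$. By symmetry I focus on $|J|\le 2L$ with $|I|$ possibly large. For each fixed $J$, the inner sum over $I$ rewrites via the band $V^{1/2}/(2L)\le|I|\le V/|J|$ as a difference of two linearized one-dimensional conditional expectations $E^x_{c_1(x,y)}-E^x_{c_2(x,y)}$, which is pointwise dominated by the one-dimensional dyadic maximal function in $x$ once one verifies, again via the Lipschitz bound, that the scales $c_i(x,y)$ are constant along the $x$-dyadic intervals on which the expectations are computed.

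The principal obstacle is then the outer sum over $J\ni y$ with $|J|\le 2L$: the short-side scales $b=\log_2|J|$ range over all integers $\le 1+\log_2 L$, so a naive pointwise bound by a maximal function would diverge. The extra cancellation must come from the one-dimensional Haar structure in $y$, namely from the identity $\sum_{|J|\le 2L} h_J(y)\langle f,h_J\rangle_y = f-E^y_{1+\log_2 L}f$. One commutes this short-side Haar sum past the linearized $x$-conditional expectation --- legitimate only because the Lipschitz condition controls the variation of $c_i(x,y)$ in $y$ over the relevant dyadic $y$-intervals --- and the boundary contribution reduces to a tensor product of a one-dimensional maximal function in $x$ with a conditional expectation in $y$, bounded on $L^p$. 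The hardest part is verifying this commutation cleanly; it is where the interplay between the hyperbolic geometry and the Lipschitz hypothesis is crucial.
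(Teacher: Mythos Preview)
Your main-part argument is fine: once the indicator $1_{\{|I||J|\le V\}}$ is constant on $I\times J$ it defines a bounded bi-parameter Haar multiplier. The gap is in the boundary term, exactly at the ``commutation'' you flag as the hardest part. For a fixed $J$ with $|J|\le 2L$, the upper endpoint of your $I$-band is $c_2=\log_2\bigl(V/|J|\bigr)$, which depends on $|J|$. The operators $E^{x}_{c_2(J)}$ are therefore genuinely different for different $J$, and the move
\[
\sum_{|J|\le 2L} h_J(y)\,E^{x}_{c_2(J)}\langle f,h_J\rangle_y
\;\longrightarrow\;
E^{x}_{c_2}\Bigl(\sum_{|J|\le 2L} h_J(y)\,\langle f,h_J\rangle_y\Bigr)
\]
is not available. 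Controlling the $y$-variation of $c_i(x,y)$ (which you can indeed do, since the hypotheses force $V$ to be constant in $y$ across dyadic intervals of length $\le L$) does nothing about this $|J|$-dependence. Without a replacement for this step the outer sum over $|J|\le 2L$ really does diverge, and the argument does not close.

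The paper's proof avoids the main/boundary decomposition entirely. It splits only by $|I|\le|J|$ versus $|J|<|I|$ and, for the first half, proves the one-variable constancy statement that is the real engine: if $|I|\le|J|$ and $|I||J|\le V(x,y)$ for some $x\in I$, then $|I||J|\le V(x',y)$ for every $x'\in I$. This removes the $x$-dependence from the set $\mathcal J(I,y)$ of admissible $J$'s, so one may apply the one-parameter dyadic Littlewood--Paley inequality in $x$ first; for fixed $I$ the inner sum over $J\in\mathcal J(I,y)$ then telescopes to a difference of two martingale averages in $y$, bounded pointwise by $M_2$, and Fefferman--Stein plus Littlewood--Paley again finish the estimate. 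This is morally the commutation you were reaching for, but organized so that the linearized scale depends only on the \emph{outer} summation index $I$, never on the inner one --- which is precisely what your scheme lacks.
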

\begin{proof}

%This operator is tightly connected to the discrete model of the directional Hilbert transform (see \cite{GT2016}), the difference being the constraint $|I|/|J| \leq V(x,y)$ being replaced by the present $|I||J|^{\alpha}\leq V(x,y)^{\alpha}$. 

In analogy to \cite{GT2016}, the heart of the matter is the following proposition.

\begin{proposition}
\label{dyadicproposition}
Take $(x,y) \in (0,\infty)^{2}$. Let $I \ni x$ and $J \ni y$ with $|I| \leq |J|$. If $|I||J| \leq V(x,y)$, then $|I||J| \leq V(x',y)$ for all $x' \in I$.
\end{proposition}
\begin{proof}
If there existed $x' \in I$ with $|I||J|> V(x',y) $, then $V(x',y) < |I||J| \leq V(x,y)$ would hold. The assumptions together with this observation immediately imply
\begin{align*}
V(x,y) = d(V(x',y),V(x,y)) \leq L d(x,x') \leq L|I| \\
 \leq L \sqrt{|I||J|}
\leq L \sqrt{V(x,y)} < V(x,y),
\end{align*}
which is a contradiction. 
\end{proof}

We can divide the sum in the definition of the dyadic operator of Theorem 
\ref{dyadictheorem} into two parts and without loss of generality concentrate on
\begin{align*}
 \sum_{I}
\sum_{\substack{ J: |I||J| \leq V(x,y) \\
 |I| \leq |J| } }  \langle h_{I} \otimes h_J , f \rangle h_{I}(x) h_J(y).   
\end{align*}

Let $\mathcal{J}(I,y)$ be the collection of dyadic intervals $J$ with $y \in J $ and $|I| \leq |J|$ for which there exists $x$ so that $|I||J| \leq V(x,y)$. Suppose then that $J \in \mathcal{J}(I,y)$. If there exists $x \in I$ such that $|I||J| \leq V(x,y) $, the above proposition implies that this inequality is true
for all $x\in I$. Hence the inner sum can be viewed as sum over $J\in \mathcal{J}(I,y)$, in effect removing the $x$ dependency from the inner sum. We then estimate the $L^p$ norm of the last display by means of the dyadic Littlewood--Paley inequality:
\begin{multline*}
\iint \left \lvert 	\sum_{I } \sum_{ J \in  \mathcal{J}(I,y)  }  \langle h_{I} \otimes h_J , f \rangle h_{I}(x) h_J(y)		\right \rvert ^{p} dxdy   \\
\lesssim 
	\iint 	\left( 
	\sum_{I  }  	\left \lvert \sum_{ J \in  \mathcal{J}(I,y)  }  \langle h_{I} \otimes h_J , f \rangle h_{I}(x) h_J(y)		\right \rvert ^{2}
	\right)^{p/2}  dxdy . 
\end{multline*}
For a fixed $I$, if $J , J' \in \mathcal{J}(I,y)$ are nested, then also $J''$ with $J' \subset J'' \subset J$ is in $ \mathcal{J}(I,y)$. Hence the inner sum telescopes into the difference of two dyadic martingale averages in the second variable. We estimate the above display by
\begin{multline*}
\iint 	\left( 
	\sum_{I  }  	\left \lvert  M_{2} ( \langle h_{I} , f \rangle_1 h_{I}(x))	(y)	\right \rvert ^{2}
	\right)^{p/2}  dxdy \\
\lesssim
	\iint 	\left( 
	\sum_{I  }  	\left \lvert   \langle h_{I} , f(\cdot,y) \rangle_1 h_{I}(x)		\right \rvert ^{2}
	\right)^{p/2}  dy dx  
\lesssim
	\norm{f}_{L^{p}}^{p}
\end{multline*}
where $M_2$ is the dyadic maximal function with respect to the second variable; it is controlled by the vector valued Fefferman--Stein maximal function theorem. The last step in the above estimation relies again on the dyadic Littlewood--Paley inequality.
\end{proof}

To obtain a dyadic model of Theorem \ref{thm:main_theorem}, we replace the lower bound of $V$ by a truncation of the admissible scales. Note that the choice $\beta =  -1$ in the resulting theorem corresponds to the directional Hilbert transform (or its dyadic model to be precise).

\begin{theorem}\label{dyadictheorem_2}
Let $L> 0$ and $ \beta  \in \mathbb{R}$, and let 
\[V:(0,\infty)^{2} \to \{ 2^{k} \}_{k \in \mathbb{Z}} \] 
be $L$-Lipschitz in the first variable with respect to the dyadic metric. Then
\[f \mapsto  \sum_{ \substack{ |I||J|^{\beta} \leq V(x,y) \\ |J|^{\beta} \geq L}} \langle h_{I} \otimes h_J , f \rangle h_{I}(x) h_J(y)   \] 
is a bounded operator from $L^{p}((0,\infty)^{2})$ to itself for all $1<p<\infty$. 
\end{theorem}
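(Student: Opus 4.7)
The plan is to adapt the proof of Theorem \ref{dyadictheorem} almost verbatim. The two modifications are that (i) the dyadic proposition must be reformulated with the scale truncation $|J|^{\beta} \geq L$ in place of the pointwise lower bound on $V$, and (ii) no splitting of the sum by the relative sizes of $|I|$ and $|J|$ is required, which in fact simplifies the argument compared to Theorem \ref{dyadictheorem}.

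First I would prove the following analogue of Proposition \ref{dyadicproposition}: if $I \ni x$, $J \ni y$, $|J|^{\beta} > L$, and $|I||J|^{\beta} \leq V(x,y)$, then $|I||J|^{\beta} \leq V(x',y)$ for every $x' \in I$. The proof mimics Proposition \ref{dyadicproposition}: assuming for contradiction that $V(x',y) < |I||J|^{\beta} \leq V(x,y)$, the two distinct dyadic values force $d(V(x',y),V(x,y)) \geq V(x,y)$, while the Lipschitz hypothesis in the first variable yields $V(x,y) \leq L\, d(x,x') \leq L|I|$. Combining this with $|I||J|^{\beta} \leq V(x,y)$ gives $|J|^{\beta} \leq L$, contradicting $|J|^{\beta} > L$. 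The boundary case $|J|^{\beta} = L$ can be removed by an arbitrarily small perturbation of the constant $L$.

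Next I would mirror the rest of the argument. The proposition allows the $x$-dependence to be removed from the inner sum: for fixed $I,y$ define $\mathcal{J}(I,y)$ to consist of the dyadic $J \ni y$ with $|J|^{\beta} > L$ for which $|I||J|^{\beta} \leq V(x,y)$ holds for some (equivalently, every) $x \in I$. The two scale constraints pin $|J|$ to a closed range, so $\mathcal{J}(I,y)$ is a chain of nested dyadic intervals through $y$, and the inner sum telescopes to a difference of two dyadic martingale averages in the second variable, pointwise bounded by $M_{2} \langle h_{I}, f \rangle_{1}(y)$ where $M_{2}$ is the dyadic maximal function in the second variable. The dyadic Littlewood--Paley inequality in the first variable then converts the outer sum into a square function, the vector-valued Fefferman--Stein inequality peels off $M_{2}$, and a final application of dyadic Littlewood--Paley in the reverse direction recovers $\|f\|_{L^{p}}$, exactly as at the end of the proof of Theorem \ref{dyadictheorem}.

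The only mildly subtle point is verifying the chain property of $\mathcal{J}(I,y)$ uniformly in the sign of $\beta$ (for $\beta < 0$ the inequality $|J|^{\beta} \geq L$ becomes an upper bound on $|J|$, and $|I||J|^{\beta} \leq V$ becomes a lower bound, but the resulting set of scales is still an interval) and the degenerate case $\beta = 0$, in which the scale truncation is either vacuous or empty and the inner sum over $J$ reduces to a Haar reconstruction in the second variable so that the operator is dominated by a Hardy--Littlewood-type maximal function in the first variable. Otherwise the proof is a clean simplification of the one for Theorem \ref{dyadictheorem}, since no case split by relative sizes of $|I|$ and $|J|$ is needed.
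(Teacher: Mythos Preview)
Your proposal is correct and follows essentially the same approach as the paper: the paper also reduces Theorem \ref{dyadictheorem_2} to the proof of Theorem \ref{dyadictheorem} by replacing Proposition \ref{dyadicproposition} with an analogue (their Proposition \ref{dyadicproposition_2}) whose proof is exactly your contradiction argument using $|J|^{\beta} \geq L$, and the paper likewise notes explicitly that no splitting by the relative sizes of $|I|$ and $|J|$ is needed. Your additional remarks on the sign of $\beta$ and the degenerate case $\beta=0$ are not spelled out in the paper but are correct and harmless elaborations.
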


The proof of the theorem is almost identical to that of Theorem \ref{dyadictheorem}. The only difference is that where we previously used Proposition \ref{dyadicproposition}, we will now use the following fact. 

\begin{proposition}
\label{dyadicproposition_2}
Take $(x,y) \in (0,\infty)^{2}$. Let $I \ni x$ and $J \ni y$. If $|I||J|^{\beta} \leq V(x,y)$, then $|I||J|^{\beta} \leq V(x',y)$ for all $x' \in I$.
\end{proposition}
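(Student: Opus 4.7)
The plan is to argue by contradiction, mirroring the proof of Proposition~\ref{dyadicproposition}. First I would suppose there exists $x' \in I$ with $V(x',y) < |I||J|^\beta$. Combined with the hypothesis $|I||J|^\beta \leq V(x,y)$, this forces $V(x',y) < V(x,y)$, and since both values lie in the $2$-lacunary set $\{2^k\}_{k\in\mathbb{Z}}$, a direct inspection of the possible dyadic intervals containing a power of two shows that the smallest dyadic interval containing both $V(x',y)$ and $V(x,y)$ has length exactly $V(x,y)$. Hence $d(V(x',y),V(x,y)) = V(x,y)$.

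Next I would invoke the $L$-Lipschitz condition in the first variable with respect to the dyadic metric, together with the trivial bound $d(x,x') \leq |I|$ valid for any two points $x,x' \in I$:
\[V(x,y) = d(V(x',y), V(x,y)) \leq L\, d(x,x') \leq L|I|.\]
Then I would use the truncation $|J|^\beta \geq L$ inherited from the summation range in Theorem~\ref{dyadictheorem_2} (which is the sole context in which this proposition is applied) to obtain the chain
\[V(x,y) \leq L|I| \leq |I||J|^\beta \leq V(x,y).\]

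The main obstacle is to coax a strict inequality out of this chain, playing the role that the strict hypothesis $V^{1/2} > L$ played in Proposition~\ref{dyadicproposition}. This is done by reading the truncation as the strict condition $|J|^\beta > L$; this is a cosmetic strengthening of the summation range that does not affect the argument, since rescaling $L$ by any fixed constant is harmless. With this reading, the middle step becomes $L|I| < |I||J|^\beta$, and the chain closes up to produce the contradiction $V(x,y) < V(x,y)$, completing the proof.
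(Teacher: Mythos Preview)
Your argument is correct and essentially identical to the paper's: both proceed by contradiction, use $d(V(x',y),V(x,y))=V(x,y)$ for dyadic values, apply the Lipschitz bound to get $V(x,y)\le L|I|$, and then use $|J|^\beta\ge L$ to close the chain. The paper simply writes the strict inequality $L|I|<|J|^\beta|I|$ without comment, whereas you explicitly justify it; if anything your treatment of this point is more careful.
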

\begin{proof}

If there were $x' \in I$ with $|I||J|^{\beta} > V(x',y)$, then $V(x',y) < |I||J|^{\beta} \leq V(x,y)$ would hold. Since $|J|^{\beta} \geq L$, we see that
\begin{align*}
V(x,y) = d(V(x',y),V(x,y)) \leq L d(x,x') \leq L|I| < |J|^{\beta} |I| \leq V(x,y),
\end{align*}
which is a contradiction. 
\end{proof}

Note that in this case the two variables behave differently. We did not assume any ordering of $|I|$ and $|J|$ so we need not decompose the operator to two sums with $|I| \leq |J|$ and $|J| < |I|$ in the proof of the theorem.

\section{The technical lemmas}

The following lemma captures the essence our main results.

\begin{lemma}
\label{lemma:main_lemma2}
Let $m \in C^{3}(\mathbb{R})$ satisfy $1_{(-\epsilon,\epsilon)} \leq m \leq 1_{(-2\epsilon,2\epsilon)}$ for dyadic $\epsilon \in (0,1]$. Let $L > 0$, $\beta \in \mathbb{R}$ and let $V : \mathbb{R}^{2} \to [0,\infty)$ satisfy
\[| V(x,y) - V(x,y')| \leq  L |y-y'|   \]
for all $x,y,y' \in \mathbb{R}$. We define
\[ T_{m,V} \Pi_{\beta} f (x,y) :=  \int_{\mathbb{R}} \int_{|\xi|^{\beta} \leq L^{-1}} m ( V(x,y) |\xi|^{\beta} |\eta| ) e^{2 \pi i (\xi x + \eta y)} d \xi d \eta . \]
Then 
\[\norm{T_{m,V} \Pi_{\beta} f }_{L^{p}(\mathbb{R}^{2})} \leq  C(p, \beta)A \left ( \log\frac{1}{\epsilon}  +1 \right )   \norm{f}_{L^{p}(\mathbb{R}^{2})}  \]
for all $1 < p < \infty$ and all Schwartz functions $f$. Here
\[A = \sum_{i=0}^{3} \sup_{t \in \mathbb{R}} |t^{i} m^{(i)}( t )| .  \]
\end{lemma}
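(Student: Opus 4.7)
The plan is to mimic the dyadic argument of Theorem \ref{dyadictheorem_2} in the continuous setting, combining a Littlewood--Paley decomposition in $\xi$, a stopping-time freezing of $V$ on intervals in $y$ supplied by the Lipschitz hypothesis, a one-scale Mikhlin multiplier bound with $V$ frozen, and a reassembly via the vector-valued Fefferman--Stein maximal inequality together with the Littlewood--Paley square function.

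First I would fix a smooth Littlewood--Paley partition $1_{|\xi|^{\beta} \leq L^{-1}} = \sum_{k \geq 0} \psi_{k}(|\xi|^{\beta})$ with $\psi_{k}$ supported in $|\xi|^{\beta} \approx 2^{-k} L^{-1}$, decompose $T_{m,V}\Pi_{\beta} = \sum_{k \geq 0} T^{(k)}$, and analyze each piece. On the symbol of $T^{(k)}$ the support of $m$ forces $|\eta| \lesssim 2^{k} L \epsilon / V(x,y)$, so the associated $y$-uncertainty scale is at least $V(x,y)/(2^{k} L \epsilon)$. For each fixed $x$ I would then partition the $y$-axis, via a stopping time based on the Lipschitz condition, into maximal intervals $I_{\alpha}^{x}$ on which $V(x, \cdot)$ varies by at most a fixed constant factor, giving $|I_{\alpha}^{x}| \asymp V_{\alpha}/L$ with $V_{\alpha} := V(x, y_{\alpha})$ for a representative point $y_{\alpha} \in I_{\alpha}^{x}$; this Lipschitz scale matches the Fourier uncertainty precisely when $2^{k} \asymp \epsilon^{-1}$, which is the anticipated source of the $\log(1/\epsilon)$ loss in the bound.

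On each $I_{\alpha}^{x}$ I would replace $V(x,y)$ by the constant $V_{\alpha}$: the frozen symbol $m(V_{\alpha}|\xi|^{\beta}|\eta|)\psi_{k}(|\xi|^{\beta})$ satisfies a Marcinkiewicz--Mikhlin condition with constant $\lesssim A$ uniformly in $V_{\alpha}$ and $k$, and therefore yields a single-scale $L^{p}$ bound proportional to $A$. The error from freezing is estimated via $|V(x,y) - V_{\alpha}| \leq L|y - y_{\alpha}|$ combined with the $C^{3}$ smoothness of $m$; three derivatives suffice to run a Schur test producing Schwartz tails in $y$ and keeping the constant proportional to $A$. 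The local pieces $\{T_{\alpha}^{(k)}\}_{\alpha}$ are then assembled in the spirit of the dyadic proof of Theorem \ref{dyadictheorem_2}: the sum over $k$ is controlled via the Littlewood--Paley square function in $\xi$, while the overlapping intervals $I_{\alpha}^{x}$ in $y$ are handled through the vector-valued Fefferman--Stein maximal inequality. Summing over the $O(\log(1/\epsilon))$ values of $k$ on which the freezing is not asymptotically tight produces the announced factor $A(\log(1/\epsilon) + 1)$.

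The principal obstacle I anticipate is the careful bookkeeping of the off-diagonal contributions when the Lipschitz scale $V_{\alpha}/L$ and the Fourier scale $V_{\alpha}/(2^{k}L\epsilon)$ disagree: here the three derivatives encoded in $A$ are essential for a Schur test that keeps constants under control without accruing an extra logarithm, and some care is needed at the truncation boundary $|\xi|^\beta = L^{-1}$ to ensure the Littlewood--Paley sum in $\xi$ is compatible with the $y$-freezing.
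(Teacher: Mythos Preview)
There is a genuine gap in the reassembly. When you freeze $V$ at $V_\alpha$ on the stopping interval $I_\alpha^x$, the resulting Marcinkiewicz multiplier $T_{V_\alpha}$ is nonlocal in $y$: its output on $I_\alpha^x$ depends on all of $f(x,\cdot)$, not just on the restriction of $f$ to a neighbourhood of $I_\alpha^x$. Summing the frozen pieces $\sum_\alpha 1_{I_\alpha^x}\, T_{V_\alpha} f$ therefore only yields the pointwise domination $|Tf|\le \sup_{\lambda>0}|T_\lambda f|$, which is exactly the open maximal-operator problem you are trying to circumvent. Your Schur-test step bounds the \emph{difference} $T_{V(x,y)}-T_{V_\alpha}$, but it does nothing to localize the main frozen term, and the vector-valued Fefferman--Stein inequality cannot repair this. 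Separately, the Littlewood--Paley reassembly in $\xi$ is also unjustified: the hypothesis is Lipschitz in $y$ only, so $V$ may depend arbitrarily on $x$, and then $T^{(k)}f$ need not be $\xi$-frequency-localized; the reverse square function inequality in $\xi$ is therefore unavailable.

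The paper's mechanism is different and uses no stopping times. It writes a Calder\'on reproducing formula in both variables, choosing the $y$-kernel $\psi_2$ to have compact \emph{space} support. In the dual pairing $\langle Tf,g\rangle$ one writes out this convolution, which ties the output point $y$ to the input point $z$ with $|y-z|$ bounded by the kernel width; the Lipschitz hypothesis together with the truncation $|\xi|^\beta\le L^{-1}$ then forces $V(x,y)/V(x,z)\in[2/3,3/2]$ directly, so the $V$-dependent integration limit can be shifted between $y$ and $z$ at the cost of a single extra dyadic shell. The $\xi$-scales are not reassembled by a square function but integrated out: the truncated integral $\int_0^{c} P_{1s}\,ds/s$ is dominated pointwise by the Hardy--Littlewood maximal function $M_1$ in the first variable, and what remains is a $y$-variable square function controlled by Fefferman--Stein. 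The factor $\log(1/\epsilon)$ enters only at the end, by writing a general $m$ as a sum of $O(\log(1/\epsilon))$ annular bumps, each of which is of the ``error-term'' type and handled by a Marcinkiewicz multiplier plus Khinchin argument with $V$ frozen at dyadic values.
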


\begin{proof}
Let $f$ be a Schwartz function. The proof consists of representing $f$ with the aid of the Calder\'on reproducing formulas; reducing the question to a cut-off of scales at the price of a Marcinkiewicz multiplier, and finally proving the theorem for the scale cut-off. For simplicity, we write $T$ for $T_{m,V}$ throughout the proof. We start the proof by first assuming that $\epsilon = 1$. 

\subsection{Calder\'on formulae}
Let $\phi_1, \phi_2 \in \mathscr{S}(\mathbb{R})$ be even and real. Let $\widehat{\phi_1}$ be supported in the annulus $[-2^{1/ |\beta|},2^{1/|\beta|}] \setminus (-2^{-1/ |\beta|},2^{-1/|\beta|})$ and $\widehat{\phi_2}$ in $[-2,2] \setminus (-1,1)$. Let $\psi_2$ with \textit{space support} in $(-2^{-5}, 2^{-5})$ be one more Schwartz function. We require the normalization 
\[1 = \int_{0}^{\infty}\widehat{\phi_1}(t) \frac{dt}{t} = \int_{0}^{\infty}\widehat{\phi_2}(t)\widehat{\psi_2}( t ) \frac{dt}{t} \]
and the mean zero condition $\widehat{\phi_2} (0) = \widehat{\psi_2}(0) = 0$.

A subscript as in $\phi_{1t}(x) = t\phi_{1}(tx)$ for $t > 0$ denotes a dilation, and the same notation is also used for $\phi_2$ and $\psi_2$. We define the Littlewood-Paley operators $P_{1s}$, $P_{2t}$ and $P_{3t}$ for $s,t > 0$ by
\begin{align*}
P_{1s}f(x,y) &=  \int_{\mathbb{R}} \phi_{1 s}(x-z)f(z,y) dz, \\
P_{2t}f(x,y) &=  \int_{\mathbb{R}} \phi_{2 t}(y-z)f(x,z) dz, \\
P_{3t}f(x,y) &=  \int_{\mathbb{R}} \psi_{2 t}(y-z)f(x,z) dz.
\end{align*}
Under the conditions stated above, the Calder\'on reproducing formulae corresponding to each variable hold true:
\begin{align*}
f(x,y) = \int_{0}^{\infty} P_{1s}f(x,y) \frac{ds}{s} = \int_{0}^{\infty} P_{2t} P_{3t}f(x,y)  \frac{dt}{t}.
\end{align*}
In particular, we can write
\[f(x,y) = \int_{0}^{\infty} \int_0^{\infty} P_{1s}P_{2t}P_{3t} f(x,y) \frac{dt}{t} \frac{ds}{s} .\]

\subsection{Decomposition of the operator}
Consider $\lambda > 0$. We let $\widehat{T_\lambda}(\xi,\eta) = m(\lambda |\xi \eta|)\widehat{f}(\xi,\eta)$. Taking the Fourier transform and using the Calder\'on formula, we note that
\begin{align*}
&\widehat{T_\lambda f}(\xi, \eta) = m (\lambda |\xi \eta|  ) \widehat{f}( \xi ,\eta ) \\
	&= \int_{0}^{\infty} \int_0^{\infty} m (\lambda |\xi \eta|  ) \widehat{\phi_{1}} ( \xi/s)\widehat{\phi_{2}}(\eta/t  )\widehat{\psi_{2}} (\eta/t )  \widehat{f}(\xi, \eta) \frac{dt}{t} \frac{ds}{s} \\
	&= \int_{0}^{\infty} \int_0^{4   /( \lambda s )} m (\lambda |\xi \eta|  ) \widehat{\phi_{1}} ( \xi/s)\widehat{\phi_{2}}( \eta/t  )\widehat{\psi_{2}}(\eta/t ) \widehat{f}(\xi, \eta) \frac{dt}{t} \frac{ds}{s}      .
\end{align*}	
The discarded part of the $t$-integral gives zero contribution because the following conditions	
\begin{itemize}
\item $\widehat{\phi_{2}}(\eta/t) \neq 0 $ only if $\frac{1}{2}   |\eta| \leq t \leq   |\eta|$,
\item $\widehat{\phi_{1}}(\xi/s) \neq 0 $ only if $2^{- 1 / |\beta| }  |\xi| \leq s \leq 2^{1/|\beta|} |\xi|$,
\item $m ( \lambda |\xi \eta|  ) \neq 0 $ only if $|\xi |^{\beta} | \eta| < 2   \lambda^{-1}$,
\end{itemize}
imply that whenever the integrand of the last display does not vanish we have
\[t \leq |\eta| \leq \frac{2   }{\lambda |\xi|^{\beta} } \leq \frac{ 4 }{\lambda s^{\beta} } .\]
Using the fact that $m (\lambda |\xi|^{\beta} | \eta| ) = 1$ when $|\xi |^{\beta} | \eta| <   \lambda^{-1}$ (in particular when $ 4s^{\beta} t \leq \lambda^{-1}$), we can write
\begin{align*}
%\label{eq:decomposition}
\widehat{T_\lambda f}(\xi, \eta)   &=  \widehat{S_\lambda f}(\xi, \eta) + \widehat{E_\lambda f}(\xi, \eta).
\end{align*}
These operators are defined as follows. We let $\tilde{\lambda} = \min \{2^{j + 2} : 2^{j} >  \lambda, j \in \mathbb{Z}\}$. Then
\begin{align*}
S_\lambda f(x,y) &= \int_{0}^{\infty} \int_0^{(\tilde{\lambda} s^{\beta} )^{-1}} P_{1s}P_{2t} P_{3t} f(x,y) \frac{dt}{t} \frac{ds}{s} ,
\end{align*}
is the principal term and 
\begin{equation*}
\widehat{E_\lambda f} = \int_{0}^{\infty} \int_{  (\tilde{ \lambda } s^{\beta})^{-1}}  ^{4  (\lambda s^{\beta} )^{-1}} m (\lambda |  \xi \eta|  ) \widehat{\phi_{1}} (\xi/s)\widehat{\phi_{2}}(\eta / t )\widehat{\psi_{2}}(\eta/t) \widehat{f}(\xi, \eta) \frac{dt}{t} \frac{ds}{s}    
\end{equation*}
is an error term.

A similar decomposition is valid with $\lambda$ replaced by $V(x,y)$. For every integer $j$, we denote 
\[\Omega_j = \{ (x,y) \in \mathbb{R}^{2}: 2^{j }  \leq V(x,y) < 2^{j+1}  \} .\] 
We have that 
\[Tf = Sf  + Ef \]
where
\begin{equation}
%\label{eq:scalepart}
1_{\Omega_j} S f (x,y) = 1_{\Omega_j}(x,y) \cdot \int_{0}^{\infty} \int_0^{ (2^{j+3} s^{\beta})^{-1} } P_{1s}P_{2t} P_{3t} f(x,y) \frac{dt}{t} \frac{ds}{s}
\end{equation}
is the principal part of the operator, and the error part $Ef$ is defined as  
\begin{equation}
\label{eq:error_term}
1_{\Omega_j} E f(x,y)  =  \int_{0}^{\infty} \int_{ (2^{j+3} s^{\beta})^{-1}} ^{ 4   ( V(x,y) s^{\beta})^{-1}}  T P_{1s} P_{2t} P_{3t} f  (x,y) \frac{dt}{t} \frac{ds}{s}
\end{equation}
for all $(x,y) \in \Omega_j$. Further, it is possible to write
\begin{align*}
Ef(x,y) &=   E_{2^{j}}f(x,y) + \int_{2^{j}}^{V(x,y)} \partial_\tau E_\tau f(x,y)d \tau  
\\
 		&\leq  |E_{2^{j}}f(x,y)| + \int_{2^{j}}^{2^{j+1}}   | \partial_\tau E_\tau f(x,y) | d\tau 
.
\end{align*}
We call these two components large variation error and small variation error
and estimate them separately.

\subsection{The large variation error}
\label{subse:lv}
The multiplier $E_{2^{j}}$ has symbol
\[ \int_{0}^{\infty} \int_{  ( 2^{j+3} s^{\beta} )^{-1}} ^{  ( 2^{j-2 } s^{\beta} )^{-1}}  m (2^{j} |  \xi \eta|  ) \widehat{\phi_{1}} (\xi /s )\widehat{\phi_{2}}(\eta /t)\widehat{\psi_{2}}(\eta/t) \frac{dt}{t} \frac{ds}{s}    \]
that is supported in 
\[\{(\xi,\eta) \in \mathbb{R}^{2} :   2^{- j - 5} \leq |\xi |^{\beta} |\eta| \leq   2^{-  j + 4} \}.\]
Since $\Omega_j$ are pairwise disjoint, we see that
\[\norm{\sum_{j \in \mathbb{Z}} 1_{\Omega_j} E_{2^j} f }_{L^{p}(\mathbb{R}^{2})} \leq \norm{ (\sum_{j \in \mathbb{Z}} | E_{2^{j}}f |^{2} )^{1/2}} _{L^{p}(\mathbb{R}^{2})} .\]
As $j$ runs over all integer values, the number of $E_{2^{j}}$ having non-zero symbol at any point in the frequency plane is uniformly bounded. Hence one can verify that $\sum_j \alpha_j E_{ 2^j}$ is a Marcinkiewicz multiplier with uniform symbol estimates for any choice of $|\alpha_j| \leq 1$. 

By the Marcinkiewicz multiplier theorem and Khinchin's inequality
\[\norm{ (\sum_{j \in \mathbb{Z}} | E_{2^{j}} f |^{2} )^{1/2}} _{L^{p}(\mathbb{R}^{2})} \lesssim  \sup_{(\alpha_j)_{j \in \mathbb{Z}}} \norm{ \sum_{j \in \mathbb{Z}} \alpha_j E_{2^{j}} f  }_{L^{p}(\mathbb{R}^{2})} \leq A   \norm{f}_{L^{p}(\mathbb{R}^{2})} \]
which proves the claimed bound for the large variation error. 

\subsection{The small variation error}
\label{subse:sv}
To estimate
\[\left \lVert \sum_{j} 1_{\Omega_j} \int_{2^{j}}^{2^{j+1}}   | \partial_\tau E_\tau f(x,y) | d\tau \right \rVert_{L^{p}(\mathbb{R}^{2})} , \]
we write $E'_\tau$ for the operator $\partial_\tau E_\tau f(x,y)$. Changing variables in the $\tau$-integral, using disjointness of $\Omega_j$, and applying Minkowski's inequality, we obtain
\begin{multline*}
\left \lVert \int_{1}^{2}  | \sum_{j}  1_{\Omega_j}  2^{j}  E_{2^{j} \tau}' f(x,y) | d\tau \right \rVert_{L^{p}(\mathbb{R}^{2})} \\
 \leq  \int_{1}^{2}  \left \lVert (\sum_{j}   |2^{j}\tau E_{2^{j} \tau}' f(x,y) |^{2} )^{1/2} \right \rVert_{L^{p}(\mathbb{R}^{2})} \frac{d\tau }{\tau}  .
\end{multline*}
The symbol of the multiplier $2^{j}\tau E_{2^{j} \tau}'$ is supported in
\[\{(\xi,\eta) \in \mathbb{R}^{2} :  2^{- j - 5 } \tau   \leq |\xi |^{\beta} | \eta |\leq  2^{-  j + 4} \tau   \},\]
and it satisfies the estimates required for the Marcinkiewicz multiplier theorem uniformly in $2^{j}\tau$. Hence
\[ \left \lVert \sum_{j}   \alpha_j 2^{j}\tau E_{2^{j} \tau}' f(x,y)  \right \rVert_{L^{p}(\mathbb{R}^{2})} \leq A  \]
with $A$ independent of $| \alpha_j | \leq 1$ so we can again use Khinchin's inequality to conclude the desired bound. The use of Marcinkiewicz multiplier theorem here and in the previous subsection yields the constant $A$ appearing in the claim. See e.g.\ \cite{Grafakos2008}.

\begin{remark}
\label{remark:lacunary}
If we were only to prove $L^{2}$ bounds and we assumed that $V$ took values in a lacunary set like $\{2^{k}\}_{k \in \mathbb{Z}}$, we could work with a non-smooth $m$ like $1_{[0,1)}$. To handle this case, we note that $\Omega_j = \{ V(x,y) = 2^{j} \}$ and that, consequently, there is no small variation error term. The operators $E_{2^{j}}$ have bounded symbols and bounded overlap as $j$ takes all integer values. Hence we can use orthogonality to conclude  
\[\norm{Ef}_{L^{2}} = \norm{ \sum_j 1_{\Omega_j} E_{2^{j}} f }_{L^{2}}  \leq \norm{ \sum_j E_{2^{j}} f }_{L^{2}} \lesssim \norm{f}_{L^{2}}. \]
This observation is needed to prove Theorem \ref{thm:3}. The rest of its proof is identical to the proof of Lemma \ref{lemma:main_lemma2} (the current Lemma with Remark \ref{remark:lipschitz}).
\end{remark}

\subsection{The principal term}
Let $v(x,y) = \min \{2^{j+2}: 2^{j} >  V(x,y), j \in \mathbb{Z}\} $. We will estimate
\begin{align*}
Sf(x,y) 
	&= \int_{0}^{\infty} \int_0^{   (v(x,y)s^{\beta})^{-1} } P_{1s}P_{2t} P_{3t} f(x,y) \frac{dt}{t} \frac{ds}{s} .
\end{align*}
Take a Schwartz function $g$. Aiming at a bound only depending on $\norm{g}_{L^{p'}}$ and $\norm{f}_{L^{p}}$, we write the $P_{3t}$ convolution out and divide the $t$-integral into two parts
\[ \iint_{\mathbb{R}^{2}}		 Sf(x,y) g(x,y) dxdy = I + II \]
where
\begin{align*}
I &= \int_{\mathbb{R}^{3}}   \int_{0}^{\infty} \int_{0}^{  (v(x,z)s^{\beta})^{-1}}  P_{1s}P_{2t }f(x,z) \psi_{2t }(y-z)g(x,y) \frac{dt}{t} \frac{ds}{s}  dzdxdy, \\
II &= \int_{\mathbb{R}^{3}}   \int_{0}^{\infty} \int_{  (v(x,z)s^{\beta})^{-1} }^{  (v(x,y)s^{\beta})^{-1}}  P_{1s}P_{2t }f(x,z) \psi_{2t }(y-z)g(x,y) \frac{dt}{t} \frac{ds}{s}  dzdxdy .
\end{align*}
We start by estimating the term $II$. Change the $y$ integration to take place before $x$- and $z$-integrations. The domain of $y$-integration is divided into two parts, in one of which $ (v(x,z)s)^{-1}  <  (v(x,y)s)^{-1}$. We will start with this part.

\subsection{The role of the Lipschitz assumption }
\label{sec:lipschitz}
Since $v$ is defined as the dyadic value with $2^{-3} v \leq V < 2^{-2} v $, we see that also  $V(x,z)^{-1} \leq V(x,y)^{-1}$ holds. We have assumed that $|\xi|^{\beta} \leq L^{-1}$ so also $L \leq |\xi|^{-\beta} \leq 2 s^{-\beta}$. By the Lipschitz assumption and the fact that $\supp \psi_2 \subset (-2^{-4}/3,2^{-4}/3)$, we see that
\[| V(x,y) - V(x,z) | 	\leq L|y-z| \leq \frac{2^{-4} L }{ 3t } \leq \frac{ 2^{-3} }{3 s^{\beta} t}  \leq  \frac{1}{24} v (x,z) \leq \frac{1}{3} V(x,z).  \]
In particular
\[  \frac{V(x,z)}{V(x,y)} \leq 1 + \frac{ |V(x,z) - V(x,y) |}{V(x,y)} = 1 + \frac{1}{3} \frac{V(x,z)}{V(x,y)}  \]
so
\begin{equation}
\label{eq:lipschitz_ratio}
1 \leq \frac{V(x,z)}{V(x,y)} \leq \frac{3}{2}
\end{equation}
An identical argument also yields the same result in the set where $v(x,y) > v(x,z)$.

\begin{remark}
\label{remark:lipschitz}
In case we assume that $V$ is bounded from below by $L^{2}$; the Lipschitz condition only holds for $|y-z| \geq L$; and $\beta = 1$, we argue as follows. We look again at the case $V(x,z)^{-1} \leq V(x,y)^{-1}$. By symmetry, we may assume that $\widehat{f}$ is supported in the double cone:
\begin{equation}
\label{assumption:fourier-support}
\supp \widehat{f} \subset \{(\xi,\eta) \in \mathbb{R}^{2}: | \xi| \leq  | \eta | \}.
\end{equation}
Since $\supp \psi_2 \subset (-2^{-4}/3,2^{-4}/3)$ , we see that
\begin{align*}
| V(x,y) - V(x,z) | 	&\leq L|y-z| \leq \frac{L}{ 48 t }
\end{align*}
whenever $L|y-z| \geq L^{2}$. As a consequence of restriction to the cone in \eqref{assumption:fourier-support}, the values of $s$ and $t$ that have a non-zero contribution in the integral above satisfy $s \leq 2 |\xi| \leq 2 |\eta| \leq 4 t $. Now
\[\frac{1}{t} \leq 2  \frac{1}{\sqrt{ts} }  \leq 2 \sqrt{v(x,z)} \leq 8 \sqrt{V(x,z)} \]
and by the lower bound $L \leq \sqrt{V(x,z)}$, we see that
\[\frac{L}{ 48t } \leq \frac{1}{6}V(x,z).\]
Then
\[  \frac{V(x,z)}{V(x,y)} \leq 1 + \frac{ |V(x,z) - V(x,y) |}{V(x,y)} = 1 + \frac{1}{3} \frac{V(x,z)}{V(x,y)}  \]
so we have \eqref{eq:lipschitz_ratio}. It is clear also in case $L|y-z| < L^{2} \leq V(x,y)$. The same argument works in the set where $v(x,y) > v(x,z)$, so we conclude that inequality  \eqref{eq:lipschitz_ratio} holds in this setting as well.
\end{remark}

\subsection{Estimate for $II$}
Recall that $v = \min \{2^{j+2}: 2^{j} > V, j \in \mathbb{Z}\}$. By inequality \eqref{eq:lipschitz_ratio}, we can divide the integration over $x$, $y$ and $z$ in the definition of $II$ into two sets:
\begin{align*}
E_{\pm} = \{(x,y,z): 2^{\pm 1} v(x,z) = v(x,y) \}.
\end{align*}
The remaining alternative $v(x,y) = v(x,z)$ gives a zero contribution. Clearly $(x,y,z) \in E_+$ if and only if there exists a unique integer $k_{xyz}$ such that
\[  V(x,z) < 2^{k_{xyz}} \leq  V(x,y). \]
Under the condition \eqref{eq:lipschitz_ratio}, the existence of such an integer is equivalent to the following two conditions  
\begin{align*}
(x,z) &\in F_1 := \{ (x,z): (V(x,z), 3 V(x,z)/2 ] \cap D \neq \varnothing \} \quad \textrm{and} \\
(x,y) &\in F_2 := \{ (x,y): [2V(x,y)/3,V(x,y)] \cap D \neq \varnothing\}
\end{align*}
with $D = \{2^{k}\}_{k \in \mathbb{Z}}$ holding simultaneously.

Hence
\begin{multline*}
\bigg \lvert \int_{\mathbb{R}^{3}} 1_{E_{+}}\int_{0}^{\infty}   \int_{  (v(x,z) s^{\beta})^{-1} }^{  (v(x,y)s^{\beta})^{-1}}  P_{1s }P_{2t}f(x,z) \psi_{2t}(y-z)g(x,y) \frac{dt}{t}\frac{ds}{s} dydxdz \bigg \rvert  \\
=\bigg \lvert \int_{\mathbb{R}^{3}} 1_{F_1}(x,z) 1_{F_2}(x,y) \int_{0}^{\infty}   \int_{  (v(x,z) s^{\beta})^{-1} }^{  (2v(x,z)s^{\beta})^{-1}} P_{1s }P_{2t}f(x,z) 
	  \\ \cdot   \psi_{2t}(y-z)g(x,y) \frac{dt}{t}\frac{ds}{s} dydxdz \bigg \rvert   \\
= \left \lvert  \int_{\mathbb{R}^{2}} 1_{F_1} \int_{0}^{\infty}  \int_{  (v(x,z)t)^{-1/\beta} }^{  (2v(x,z)t)^{-1/\beta}}   P_{1s}P_{2t}f(x,z)  \frac{ds}{s}  P_{3t}( g 1_{F_2} )(x,z)    \frac{dt}{t} dxdz \right \rvert \\
\lesssim 	\int_{\mathbb{R}^{2}}  \int_{0}^{\infty} M_1 (P_{2t}f )(x,z) |P_{3t}(g 1_{F_2} )(x,z)|    \frac{dt}{t} dzdx \\
\leq 	\left \lVert \left( \int_{0}^{\infty} |M_1 (P_{2t}f )|^{2} \frac{dt}{t} \right)^{1/2} \right \rVert    _{L^{p}(\mathbb{R}^{2})} \left \lVert    \left( \int_{0}^{\infty}   | P_{3t}( g 1_{F_2} )|^{2}    \frac{dt}{t} \right)^{1/2}      \right \rVert    _{L^{p'}(\mathbb{R}^{2})} .
\end{multline*}

The first factor is bounded by a square function estimate and an application of the $ L^{2}(\mathbb{R}_{+}, dt/t)$ valued Fefferman--Stein maximal function theorem (see the book \cite{Stein1993}). We denoted by $M_1$ the Hardy--Littlewood maximal function on the first variable. The second factor is bounded by the square function estimate. The same argument applies if we replace $E_+$ by $E_-$. Hence we have bounded the term $II$.

\subsection{Estimate for $I$}
The integration limits in the definition of $I$ do not depend on the variable $y$. Hence we can skip the argumentation involving the Lipschitz assumption and directly apply the previous computation. For $\beta > 0$, This results in the bound
\begin{multline*}
 I \leq \iint_{\mathbb{R}^{2}}   \int_{0}^{\infty} \int_{0}^{  (v(x,z)s^{\beta})^{-1}}  P_{1s }P_{2t}f(x,z) P_{3t}g(x,z) \frac{dt}{t} \frac{ds}{s}  dzdx \\
 	 \leq \left \lVert \left(       \int_{0}^{\infty}   \left \lvert \int_{0}^{   (v(x,z)t)^{-1/\beta}} P_{1s} P_{2t}f  \frac{ds}{s} \right \rvert ^{2}  \frac{dt}{t} \right)^{1/2} \right \rVert    _{L^{p}(\mathbb{R}^{2})} \\
  \times	 \left \lVert    \left( \int_{0}^{\infty}   | P_{3t} g |^{2}    \frac{dt}{t} \right)^{1/2}      \right \rVert    _{L^{p'}(\mathbb{R}^{2})} \\
  \leq \left \lVert \left(       \int_{0}^{\infty}  |  M_1 P_{2t}f | ^{2}  \frac{dt}{t} \right)^{1/2} \right \rVert    _{L^{p}(\mathbb{R}^{2})} \left \lVert    \left( \int_{0}^{\infty}   | P_{3t} g |^{2}    \frac{dt}{t} \right)^{1/2}      \right \rVert    _{L^{p'}(\mathbb{R}^{2})}.
\end{multline*}
The last inequality follows from the fact that since
\[x \mapsto \int_{0}^{1} \phi_{1s}(x) \frac{ds}{s} \]
is a Schwartz function, the corresponding convolution is controlled by the Hardy--Littlewood maximal function, and the rest of the estimate is justified by a simple change of variable. Now the estimation can be concluded as in the previous case by Fefferman--Stein and Littlewood--Paley inequalities. The case $\beta < 0$ is similar except for the second line that has integration in the interval $(v(x,z) t)^{-1/\beta} ,\infty)$. However, the same bound holds true.  

\subsection{General support}
We have proved the claim for $\epsilon = 1$. To relax this assumption, take $\epsilon = 2^{-i}$ with $i \in \mathbb{Z}_{+}$ and take $m$ as in the statement of the Lemma. Take positive and smooth functions $\varphi_j$ with $\supp \varphi_j \subset (2^{-j-1},2^{-j+1})$ for every integer $0 \leq j < i$ and let
\[\tilde{m} = \sum_{j=0}^{i-1} \varphi_j + m \]
so that $1_{[0,1)} \leq \tilde{m} \leq 1_{(-2,2)}$ can be continued to be an even function that satisfies the assumptions for which we have already proved the result. Now
\[T_{m,V} = T_{\tilde{m},V} - \sum_{j=0}^{i-1} T_{\varphi_j,V} .\]
The operator norm of the first term we have already bounded. The remaining terms are of the form \eqref{eq:error_term}, that is,
\[T_{\varphi_j,V}f(x,y) =  \int_{0}^{\infty} \int_{ (4  V(x,y) s^{\beta})^{-1}} ^{4   ( V(x,y) s^{\beta})^{-1}}  T_{\varphi_j,V}f P_{1(2^{-j}s)} P_{2t} P_{3t} f  (x,y) \frac{dt}{t} \frac{ds}{s}\]
and, accordingly, they admit a bound comparable to the absolute constant
\[\sum_{k=0}^{3} \sup_{t\in (2^{-j-1},2^{-j+1})} |t^{k}\varphi_j^{(k)}(t)| .\] 
This was proved in subsections \ref{subse:lv} and \ref{subse:sv}. In total,
\[\norm{T_{m,V}} \lesssim i = - \log \epsilon.\]
\end{proof}

Next we formulate another version of the Lemma that will be needed to prove Theorem \ref{thm:main_theorem_2}.

\begin{lemma}
\label{lemma:main_lemma}
Let $m \in C^{3}(\mathbb{R})$ satisfy $1_{(-\epsilon,\epsilon)} \leq m \leq 1_{(-2\epsilon,2\epsilon)}$ for dyadic $\epsilon \in (0,1]$. Let $L,V_{min} > 0$ and let $V : \mathbb{R}^{2} \to [V_{min},\infty)$ satisfy
\[| V(z) - V(z')| \leq \max( V_{min} , L |z-z'|  ) \]
for all $z,z' \in \mathbb{R}^{2}$. We define
\[ T_{m,V} f (x,y) :=  \iint_{\mathbb{R}^{2}} m ( V(x,y) |\xi| | \eta| ) \widehat{f}(\xi, \eta) e^{2 \pi i (\xi x + \eta y)} d \xi d \eta  .\] 
Then 
\[\norm{T_{m,V}f }_{L^{p}(\mathbb{R}^{2})} \leq  C(p)A \left \lceil \log\frac{1}{\epsilon} \right \rceil \left \lceil \log \frac{ L }{ V_{min}^{1/2}}  \right \rceil \norm{f}_{L^{p}(\mathbb{R}^{2})}  \]
for all Schwartz functions $f$ and $p > 1$. Here $\lceil \cdot \rceil = \inf \{ k \in \mathbb{Z}: k > 0, k > \cdot \}$ and
\[A = \sum_{i=0}^{3} \sup_{t \in \mathbb{R}} |t^{i} m^{(i)}( t )| . \]
\end{lemma}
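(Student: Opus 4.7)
The plan is to mimic the proof of Lemma~\ref{lemma:main_lemma2}, using the cone symmetrization from Remark~\ref{remark:lipschitz} to replace the frequency truncation, and an additional dyadic decomposition in the scale of $|y-z|$ relative to the Lipschitz threshold $V_{min}/L$ to handle the fact that the Lipschitz bound is only available on distant pairs.

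First I would perform the ``general support'' reduction as at the end of Lemma~\ref{lemma:main_lemma2}'s proof: writing $m = \tilde m - \sum_{j=0}^{i-1}\varphi_j$ with $i\leq\lceil\log_2(1/\epsilon)\rceil$ reduces the problem to bounding each of $1+i$ component operators by $\lesssim A\lceil\log(L/V_{min}^{1/2})\rceil\|f\|_{L^p}$. For each component I would then apply the Calder\'on reproducing formula and the $S+E$ decomposition. The large- and small-variation error terms use no Lipschitz input, and the arguments of subsections~\ref{subse:lv} and~\ref{subse:sv} apply verbatim to give $\lesssim A\|f\|_{L^p}$. For the principal term $S$, I would use the symmetry $(\xi,\eta,x,y)\mapsto(\eta,\xi,y,x)$ of $m(V|\xi\eta|)$ to restrict to Fourier support in the cone $|\xi|\leq|\eta|$, obtaining the scale relation $s\leq 4t$ as in Remark~\ref{remark:lipschitz}; the $I$-term is then handled as in Lemma~\ref{lemma:main_lemma2}.

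The new step is the treatment of $II$. In the cone, the $II$-integration gives $t\geq 1/(2\sqrt{V(x,z)})$, while $\supp\psi_{2t}$ forces $|y-z|\leq 1/(48t)$. I would partition $II=\sum_j II_j$ dyadically by the scale of $|y-z|$ relative to $V_{min}/L$, with $II_j$ localized to $|y-z|\sim 2^j V_{min}/L$. For $j<0$ one has $|V(y)-V(z)|\leq V_{min}\leq V$, yielding the ratio bound $V(y)/V(z)\leq 2$ directly; for $j$ exceeding $2\lceil\log_2(L/V_{min}^{1/2})\rceil$ the Lipschitz bound combined with the cone estimate $1/t\leq 2\sqrt{V}$ again gives a ratio of size $O(1)$. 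In both ranges the argument of Lemma~\ref{lemma:main_lemma2} applies and contributes $\lesssim A\|f\|_{L^p}$ in total. For each of the $\lesssim\lceil\log(L/V_{min}^{1/2})\rceil$ intermediate scales, the cone and support constraints force a matching lower bound $V\gtrsim(V_{min}2^j/L)^2$, which I would use together with the Littlewood--Paley and Fefferman--Stein estimates to show that each $II_j$ contributes $\lesssim A\|f\|_{L^p}$; summing over $j$ then produces the claimed logarithmic factor.

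The hard part will be producing the uniform bound for $II_j$ at each intermediate scale: the ratio $V(y)/V(z)$ can deteriorate like $L^2/(V_{min}2^j)$, and one must argue that the matching lower bound $V\gtrsim(V_{min}2^j/L)^2$ compensates for this deterioration inside the square-function estimate, so that the constant in the bound is independent of~$j$. This balance between a local ratio bound and a local lower bound on~$V$ is the main technical point distinguishing Lemma~\ref{lemma:main_lemma} from Lemma~\ref{lemma:main_lemma2}.
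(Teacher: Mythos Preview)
Your route has a genuine gap at the intermediate $|y-z|$-scales. On $II_j$ with $|y-z|\sim 2^jV_{min}/L$, the cone--support lower bound $V\gtrsim(2^jV_{min}/L)^2$ is only stronger than the global bound $V\ge V_{min}$ once $j$ exceeds $K/2$, where $K\sim\log_2(L^2/V_{min})$; at $j\approx K/2$ it gives nothing new. Combining the better of the two lower bounds with $|V(x,y)-V(x,z)|\le L|y-z|\sim 2^jV_{min}$ still allows a ratio $V(x,y)/V(x,z)$ of order $\min(2^j,2^{K-j})$, so the $t$-integration in $II_j$ spans about $\min(j,K-j)$ dyadic intervals. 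Bounding each such interval uniformly and summing over $j$ gives $\sum_{j}\min(j,K-j)\sim K^2$, a squared logarithm rather than the single one in the statement. A lower bound on $V$ \emph{enlarges} the admissible $t$-range via $t\ge 1/(2\sqrt V)$, so it cannot by itself shorten that range; and the $t$-ranges for different $j$ are nested rather than disjoint, so there is no orthogonality between the $II_j$ to recover the loss.

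The paper avoids this entirely by decomposing not in $|y-z|$ but in the \emph{value of $V$}. One writes
\[T_{m,V}=\sum_{j=1}^{k}1_{\{2^{j-1}V_{min}\le V<2^{j}V_{min}\}}T_{m,V}\;+\;1_{\{V\ge 2^{k}V_{min}\}}T_{m,V},\]
with $k$ chosen so that $2^{k-1}V_{min}\le L^2<2^kV_{min}$. On each level set in the sum, $V$ varies by at most a factor of $2$, so the ratio bound \eqref{eq:lipschitz_ratio} holds trivially and the proof of Lemma~\ref{lemma:main_lemma2} (via Remark~\ref{remark:lipschitz}) applies with a uniform constant; the final term has effective lower bound $\ge L^2$, where Remark~\ref{remark:lipschitz} applies directly. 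Summing the $k\sim\log(L^2/V_{min})$ terms yields exactly the single logarithmic factor. This argument is a few lines and makes the $|y-z|$-decomposition unnecessary; the obvious way to repair your approach---further decomposing each $II_j$ by level sets of $V$---collapses to precisely this.
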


\begin{proof}
Note that the proof of the previous Lemma together with the included Remark \ref{remark:lipschitz} proves the Lemma for $V_{min} \geq L^{2}$. To get rid of this restriction, we note that in the complementary case it is possible to write
\[T_{m,V} = \sum_{j=1}^{k} 1_{ \{2^{j-1}V_{min} \leq V < 2^{j} V_{min}\} } T_{m,V}  + 1_{ \{2^{k}V_{min} \leq V \}} T_{m,V}   \]
with $k$ being the integer to satisfy $2^{k-1} V_{min} \leq  L^{2} < 2^{k} V_{min}$. For the operators 
\[ 1_{ \{2^{j-1}V_{min} \leq V < 2^{j} V_{min}\} } T_{m,V}   \]
the inequality \eqref{eq:lipschitz_ratio} is trivially satisfied so we conclude that they are bounded with a uniform constant. Hence
\[\norm{T_{m,V}} \lesssim - k \log \epsilon \lesssim \log(L^{2} / V_{min} ) \log (1/\epsilon).\]
This completes the proof of this variant.
\end{proof}

\section{The main theorems}
Next we discuss how to get the Theorems formulated in the introduction from the technical Lemmas.

\begin{proof}[Proof of Theorems \ref{thm:main_theorem} and \ref{thm:main_theorem_2}]
To prove these theorems, it remains to explain how to replace the function $m$ of the previous Lemmas by a more general function. We let $m$ first be a $C^{3}(\mathbb{R})$ function that is positive and decreasing in $[0,\infty)$. In addition, we require that $m$ has support contained in $[-2,2]$ and $m(0) = 1$. We divide $m$ into layers: Let
\begin{align*}
m_1 &= \min(m , m(2^{-1})  ) \\
m_i &= \min( m - m_{i-1}, (m - m_{i-1})(2^{-i})), i \in \{2,3,\ldots \}.
\end{align*}
Summing the layers $m_i$ up, we recover $m$. Moreover, adding and subtracting a small correction term at each point $2^{-i}$, the same decomposition continues holding true with smooth $m_i$. The constant $A$ from Lemma \ref{lemma:main_lemma} corresponding to each $m_i / \norm{m_i}_{\infty}$ is uniformly bounded because of smoothness and compact support. Applying Lemma \ref{lemma:main_lemma} (or Lemma \ref{lemma:main_lemma2}) with $\epsilon = 2^{-i}$, we see that 
\[\norm{T_{V,m}}_{L^{p} \to L^{p}} \leq \sum_{i = 1}^{\infty} \norm{T_{V,m_i}}_{L^{p} \to L^{p}}   \lesssim \sum_{i=1}^{\infty} i \norm{m_i}_{\infty} \lesssim  \sum_{i=1}^{\infty} i 2^{-i} < \infty.\]
By linearity, the same result holds for all compactly supported functions in $C^{3}(\mathbb{R})$.
\end{proof}

\begin{proof}[Proof of Theorem \ref{thm:3}]
Under the assumptions of this theorem, we can mostly follow the proof of Lemma \ref{lemma:main_lemma2}. We decompose the operator as usually. Recalling Remark \ref{remark:lacunary}, we see that the error part of the operator is easily controlled. On the other hand, the principal term is essentially the same as in the proof of Lemma \ref{lemma:main_lemma} so we are done. 
\end{proof}

%\section{On the rough cut-off}
%As discussed in the introduction, the methods of the ball multiplier theorem \cite{Fefferman1971} show that the multiplier $1_{[0,1]}(|\xi \eta|)$ cannot be bounded on $L^{p}(\mathbb{R}^{2})$ if $p \neq 2$. However, in $L^{2}$ this obstruction disappears, and also Theorem \ref{thm:main_theorem} has an $L^{2}$ variant with an indicator function in place of $m$. Smoothness of the generating multiplier $m$ was needed for two purposes: controlling the short variation error and applying the Marcinkiewicz multiplier theorem. Hence it is easy to modify the argument to prove:
%

\end{document}